\definecolor{darkgreen}{rgb}{0,0.45,0}
\definecolor{darkred}{rgb}{0.75,0,0}
\definecolor{darkblue}{rgb}{0,0,0.6}
\newbox\pbbox
\theoremstyle{plain}
\newtheorem{theorem}{Theorem}[section]
\newtheorem{proposition}[theorem]{Proposition}
\newtheorem{lemma}[theorem]{Lemma}
\theoremstyle{definition}
\newcommand{\Gpd}{\mathsf{Gpd}} 
\newcommand{\sSet}{\mathsf{sSet}}
\newcommand{\Aut}{\mathrm{Aut}} 
\newcommand{\Hom}{\operatorname{hom}} 
\newcommand{\Ob}{\operatorname{Ob}} 
\newcommand{\Gm}{\mathbb{G}_m}
\newcommand{\GLn}{\textbf{Gl}_n}
\newcommand{\PGLn}{\textbf{PGl}_n}
\newcommand{\PGLp}{\textbf{PGl}_p}
\newcommand{\BGLn}{B\textbf{Gl}_n}
\newcommand{\diag}{\operatorname{diag}}
\newcommand{\Bet}{B_{\text{et}}}
\newcommand{\BNis}{B_{\text{Nis}}}
\newcommand{\et}{\text{et}}
\newcommand{\Nis}{\text{Nis}}
\newcommand{\Iso}{\textbf{Iso}}
\newcommand{\Ext}{\textbf{Ext}}
\newcommand{\C}{\mathscr{C}}
\newcommand{\Mor}{\operatorname{Mor}}
\newcommand{\CenExt}{\mathrm{CenExt}}
\newcommand{\EZF}{\textbf{E}_{Z}F}
\newcommand{\sPreB}{\operatorname{s}\textbf{Pre}}
\newcommand{\Spec}{\operatorname{Spec}}
\newcommand{\sFvec}{\operatorname{s}(F-\textbf{vec})}
\newcommand{\sPreF}{\operatorname{s}\textbf{Pre}_{F}(Sm_k)}
\newcommand{\bbZ}{\mathbb{Z}} 
\renewcommand{\to}{\rightarrow} 
\newcommand{\iso}{\cong} 
\renewcommand{\equiv}{\simeq} 
\newcommand{\colim@}[2]{%
  \vtop{\m@th\ialign{##\cr
    \hfil$#1\operator@font colim$\hfil\cr
    \noalign{\nointerlineskip\kern1.5\ex@}#2\cr
    \noalign{\nointerlineskip\kern-\ex@}\cr}}%
}
\renewcommand{\varprojlim}{%
  \mathop{\mathpalette\varlim@{\leftarrowfill@\scriptscriptstyle}}\nmlimits@
}
\renewcommand{\varinjlim}{%
  \mathop{\mathpalette\varlim@{\rightarrowfill@\scriptscriptstyle}}\nmlimits@
}
\begin{document}
\title{Central extensions and the classifying spaces of projective linear groups}

\author{Alexander Rolle}

\maketitle

\section*{Introduction}
The first result of this paper is a generalization of the fact that the set of isomorphism classes of central extensions of a group $G$ by an abelian group $A$ is in bijection with the elements of the cohomology group $H^2 (G,A)$. Letting $G$ be a presheaf of groupoids on a small site, and $A$ a sheaf of abelian groups, we use the work of Jardine on the theory of higher stacks to show that there is a bijection
\[
	H^2 (BG, A) \iso \CenExt(G,A)
\]
where $\CenExt(G,A)$ is an appropriate set of equivalence classes of ``central extensions'' of $G$ by $A$. Classes in $H^2 (BG, A)$ corresponding to central extensions of sheaves of groups can be interpreted as universal obstruction classes. The motivating example is the extension $\Gm \to \GLn \to \PGLn$ of sheaves of groups on the \'{e}tale site of a regular scheme, and in this case, our results recover the usual obstruction classes in the Brauer group of the base scheme.

In the second part of this paper, we consider instead the Nisnevich topology on the site $Sm_k$ of smooth schemes over a field $k$: for $G$ a presheaf of groups on this site, we show that Nisnevich $\PGLn$-torsors over the Nisnevich classifying space $\BNis G$ can be interpreted as homomorphisms $G \to \PGLn$. There is a universal obstruction class, which measures when a projective representation factors through a linear representation, in the motivic cohomology group $H^3 (\BNis \PGLn, \bbZ(1))$, by the results of the first section.

On the Nisnevich site $Sm_k$, there is another notion of classifying space that has been studied in motivic homotopy theory: this is the \'{e}tale classifying space $\Bet G$. The relationship between the motivic cohomology of $\BNis G$ and that of $\Bet G$ is quite interesting. According to a result of Morel-Voevodsky, the motivic cohomology of the \'{e}tale classifying space of a linear algebraic group $G$ recovers the Chow ring of the classifying space of $G$, in the sense of Totaro: $A^*_G \iso H^{2*} (\Bet G, \bbZ(*))$. In the final result of this paper, we use work of Vistoli on $A^*_{\PGLp}$ to show that, when $p$ is an odd prime, the canonical homomorphism in motivic cohomology
\[
	H^{2*} (\Bet \PGLp, \bbZ(*)) \to H^{2*} (\BNis \PGLp, \bbZ(*))
\]
is injective, over any field of characteristic zero containing a primitive $p^{th}$ root of unity. 

\section{Cohomology and central extensions}

The results of this section require significant parts of the theory of non-abelian cohomology developed by Jardine \cite[Chapter 9]{loc-hom-theory}, generalizing work of Giraud \cite{giraud} and Breen \cite{breen}. We begin by introducing the necessary parts of this theory.

Let $A$ be a sheaf of abelian groups on a site $\C$. There is an associated presheaf of 2-groups $\Iso(A)$ defined as follows: for $U$ an object of $\C$, the 1-morphisms of $\Iso(A)(U)$ are the isomorphisms
\[
	\alpha : A|_U \to A|_U
\]
of sheaves of groups on $\C / U$, and the 2-morphisms are given by
\begin{align*}
	\Iso(A)(U)(\alpha, \alpha) &= A(U) \; \; 
		\text{for all 1-morphisms $\alpha$} \; ; \\\
	\Iso(A)(U)(\alpha, \beta) &= \emptyset \; \;
		\text{for all $\alpha \neq \beta$} \; .
\end{align*}
Define a sheaf of 2-groups $A[2]$ on $\C$ such that $A[2](U)$ has only the identity 1-morphism, and has $A(U)$ for 2-morphisms.

Let $f: A[2] \to \Iso(A)$ be the map that includes $A[2]$ as a subobject, and define a retraction $g: \Iso(A) \to A[2]$ by
\[
	g(h: \alpha \to \alpha) = h : * \to * \; ,
\]
for any 2-morphism $h$ of $\Iso(A)(U)$.

The approach to non-abelian cohomology described in \cite[Chapter 9]{loc-hom-theory} makes heavy use of the Eilenberg-Mac Lane $\overline{W}$ functor. We will not need the details of the construction, which can be found in \cite[Section 9.3]{loc-hom-theory} or \cite{stevenson}, but we will use a few facts, which we now summarize.

Write $s_0\Gpd$ for the category of groupoids enriched in simplicial sets; then $\overline{W}$ is a functor $s_0\Gpd \to \sSet$. If $G$ is a groupoid enriched in simplicial sets, \cite[Corollary 9.39]{loc-hom-theory} says that there is a natural weak equivalence $d(BG) \to \overline{W}(G)$, where $d$ is the diagonal functor, and $BG$ is the bisimplicial set given by viewing $G$ as a simplicial groupoid, and applying the nerve functor $B$ pointwise.

The nerve also defines a functor $B : 2-\Gpd \to s_0\Gpd$. If $H$ is a 2-groupoid, then $BH$ is the groupoid enriched in simplicial sets with
\[
	\Ob(BH) = \Ob(H) \; \text{and} \; \Mor(BH) = B(\Mor(H)) \; .
\]
It is an abuse of notation, but we will write $\overline{W}(H)$ for the simplicial set given by first applying $B$ to the 2-groupoid $H$, and then applying $\overline{W}$.

These functors extend to the presheaf level by applying them sectionwise.

Because $A[2]$ is a sheaf of 2-groups, we can identify $B(A[2])$ with the simplicial sheaf of morphisms from the unique object to itself; this simplicial sheaf is just $BA$. By \cite[Corollary 9.39]{loc-hom-theory}, we have
\[
	\overline{W}A[2] \equiv d(BBA) \equiv K(A, 2) \; .
\]
There is a model structure on the category of presheaves of 2-groupoids on $\C$ such that a map $A \to B$ is a weak equivalence if and only if $\overline{W}A \to \overline{W}B$ is a local weak equivalence of simplicial presheaves; this is \cite[Theorem 9.57]{loc-hom-theory}.

The Eilenberg-Mac Lane functor $\overline{W}$ is part of a Quillen equivalence between simplicial presheaves and presheaves of groupoids enriched in simplicial sets \cite[Theorem 9.50]{loc-hom-theory}, and it follows from this that
\[
	[BG, \overline{W}(H)] \iso [G, H]_{2-\Gpd}
\]
for any presheaf of groupoids $G$ and for any presheaf of 2-groupoids $H$.

For presheaves of 2-groupoids $A,B$, write $h(A,B)$ for the category whose objects are diagrams
\[
	\xymatrix{
	&A &C \ar[l]_{\sim} \ar[r] &B
	}
\]
and whose morphisms are commutative diagrams
\[
	\xymatrix{
	& &C \ar[dl]_{\sim} \ar[dr] \ar[dd] &\\
	&A & &B\\
	& &D \ar[ul]^{\sim} \ar[ur] \\
	}
\]
This is the category of cocycles from $A$ to $B$. By \cite[Theorem 6.5]{loc-hom-theory}, we have
\[
	\pi_0 h(A,B) \iso [A,B]_{2-\Gpd} \; .
\]

So, there are isomorphisms
\begin{align*}
	H^2 (BG, A) &= [BG, K(A,2)] \\\
		&\iso [BG, \overline{W}A[2]] \\\
		&\iso [G, A[2]]_{2-\Gpd} \\\
		&\iso \pi_0 h(G, A[2]).
\end{align*}

As the map $f: A[2] \to \Iso(A)$ is a section, the induced map
\[
	f_* : \pi_0 h(G, A[2]) \to \pi_0 h(G, \Iso(A))
\]
is a monomorphism.

In \cite[Theorem 9.66]{loc-hom-theory}, Jardine constructs a bijection
\[
	 \pi_0 h(G, \Iso(A)) \iso \pi_0 \Ext(G, A)
\]
with the set of path components of a category $\Ext(G, A)$, which we'll now define.

If $p: G' \to G$ is a map of presheaves of groupoids, let im$(p)$ be the presheaf of groupoids that has the same objects as $G'$, and with im$(p)(x,y)$ given by the image of the function $G'(x,y) \to G(p(x), p(y))$. Say that a map $p: G' \to G$ is essentially surjective if the canonical map im$(p) \to G$ is a local weak equivalence of presheaves of groupoids, in the sense that the induced map of nerves is a local weak equivalence of simplicial presheaves.

Say that a kernel of a map $p: G' \to G$ is a diagram
\[
	\xymatrix{
	&K \ar[rr]^{j} \ar[dr] & &\Aut(G') \ar[dl]\\
	& &\Ob(G') & \\
	}
\]
where $K$ is a group object in the category of presheaves over $\Ob(G')$, and $j$ is a homomorphism of group objects over $\Ob(G')$, such that the following diagram is a pullback:
\[
	\xymatrix{
	&K \ar[r]^j \ar[d] &\Aut(G') \ar[d]^{p_*}\\
	&\Ob(G') \ar[r]_{p_* \cdot e} &\Aut(G)\\
	}
\]
Say that a kernel for $p$ in $A$ is a kernel $j: K \to \Aut(G')$ together with a map of presheaves $w: K \to A$ such that the induced map $K \to A \times \Ob(G')$ is a homomorphism of group objects over $\Ob(G')$ that induces an isomorphism of associated sheaves.

The objects of $\Ext(G, A)$ are triples $(p, j, w)$, where $p: G' \to G$ is essentially surjective, and $(j, w)$ is a choice of kernel in $A$ for $p$.

A morphism $\sigma : (p,j,w) \to (p',j',w')$ of this category is a local weak equivalence $\sigma: G' \to G''$ such that $p' \circ \sigma = p$, and $w' \circ \sigma_* = w$.

If $G$ is a presheaf of groups, and $p: G' \to G$ is an essentially surjective map of presheaves of groupoids, then $G'$ must be locally connected, i.e., a gerbe.

The map $K \to \Ob(G')$ is an example of what Jardine calls a family of presheaves of groups $\mathscr{F} \to S$ over the presheaf $S$, which is a group object in the category of presheaves over $S$. For an element $x \in S(U)$, the fibre $\mathscr{F}_x$ of the family $\mathscr{F}$ over $x$ is defined by the pullback diagram
\[
	\xymatrix{
	&\mathscr{F}_x \ar[r] \ar[d] &\mathscr{F}|_U \ar[d] \\
	&\ast \ar[r]_x &S|_U\\
	}
\]
If $(p: G' \to G, j,w)$ is an object of $\Ext(G, A)$, and $\alpha : x \to y$ is a morphism of $G'(U)$, then $\alpha$ defines an isomorphism $K_x \to K_y$ of presheaves of groups on $\C / U$ by conjugation; via $w$, we get an induced automorphism of $A|_U$.

Let $\CenExt(G, A) \subset \pi_0 \Ext(G, A)$ be the subset consisting of equivalence classes that have a representative $(p: G' \to G, j,w)$ with the following property: for all objects $U$ of $\C$, and for all morphisms $\alpha$ of $G'(U)$, the automorphism of $A|_U$ given by conjugation with $\alpha$ is the identity.

If $G$ happens to be a sheaf of groups, and $(p: G' \to G, j,w)$ is an object of $\Ext(G,A)$ such that $G'$ is a sheaf of groups, then the kernel of $p$ is necessarily isomorphic to $A$; the extension $(p,j,w)$ has the property of the last paragraph if and only if the kernel of $p_U$ is contained in the centre of $G'(U)$ for all objects $U$ of $\C$.

\begin{theorem} \label{class-of-cen-ext}
For any presheaf of groupoids $G$ on $\C$, and any sheaf of abelian groups $A$, there is a bijection
\[
	H^2 (BG, A) \iso \CenExt(G, A) \; .
\]
\end{theorem}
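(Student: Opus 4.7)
The plan is to assemble the theorem from three ingredients already in place: the chain of isomorphisms $H^2(BG, A) \iso \pi_0 h(G, A[2])$ derived above, the monomorphism $f_* : \pi_0 h(G, A[2]) \into \pi_0 h(G, \Iso(A))$ induced by the section $f$, and Jardine's bijection $\pi_0 h(G, \Iso(A)) \iso \pi_0 \Ext(G, A)$. Composing these produces an injection $H^2(BG, A) \into \pi_0 \Ext(G, A)$, and the remaining task is to identify its image as precisely $\CenExt(G, A)$.

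First, I would pin down which cocycles $G \xleftarrow{\sim} C \to \Iso(A)$ factor through $A[2]$. Because $A[2](U)$ has only the identity 1-morphism, such a factorization exists if and only if the cocycle's map to $\Iso(A)$ sends every 1-morphism of $C$ to the identity automorphism of $A$. Equivalently, using the retraction $g$, a class in $\pi_0 h(G, \Iso(A))$ lies in the image of $f_*$ iff it is fixed by $f_* g_*$, which by a short cocycle-level manipulation is exactly the statement that its 1-morphism data is trivial.

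Next, I would trace through Jardine's proof of \cite[Theorem 9.66]{loc-hom-theory} to see how the 1-morphism data of a cocycle $C \to \Iso(A)$ manifests in the associated object $(p: G' \to G, j, w)$ of $\Ext(G, A)$. The 2-group $\Iso(A)$ is defined so that its 1-morphism component records an action on $A$ by automorphisms; under Jardine's correspondence, this action is precisely the conjugation action of morphisms of $G'(U)$ on the fibre $K_x$, transported along the isomorphism $K_x \iso A|_U$ supplied by $w$. Thus the cocycle factors through $A[2]$ if and only if that conjugation action is identically trivial, which is the defining condition for the subset $\CenExt(G, A) \subset \pi_0 \Ext(G, A)$.

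The main obstacle will be the identification in the previous paragraph: Jardine's cocycle-to-extension construction reconstructs $G'$ from the cocycle data in a somewhat indirect way, and one must verify that the conjugation action of $G'$ on the kernel matches, under $w$, the automorphism of $A|_U$ assigned to each 1-morphism by the cocycle's map to $\Iso(A)$. Once this identification is checked, both containments $\mathrm{im}(f_*) \subseteq \CenExt(G, A)$ and $\CenExt(G, A) \subseteq \mathrm{im}(f_*)$ follow immediately, yielding the stated bijection.
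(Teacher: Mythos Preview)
Your plan matches the paper's approach exactly: the paper also identifies $\CenExt(G,A)$ as the image of the composite
\[
H^2(BG,A) \iso \pi_0 h(G,A[2]) \xrightarrow{f_*} \pi_0 h(G,\Iso(A)) \iso \pi_0 \Ext(G,A),
\]
and the substance of the argument is precisely the verification you flag as ``the main obstacle.'' The paper does not leave this to a citation; it unpacks Jardine's bijection in both directions. For $\psi : \pi_0 h(G,\Iso(A)) \to \pi_0 \Ext(G,A)$ it recalls the explicit $\EZF$ construction, and then, given a cocycle factoring through $A[2]$ (so every $\alpha_*$ is the identity on $A|_U$), computes directly that conjugation by an arbitrary morphism $[(\alpha,f)]$ of $E_ZF(U)$ fixes every element of $A$. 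For the converse, it uses the explicit inverse $\phi$ via the resolution 2-groupoid $R(p)$: given $(p,j,w)$ with trivial conjugation action, the cocycle $F(p)$ sends each 1-morphism $\alpha$ to the conjugation-by-$\alpha$ automorphism of $A|_U$, which is the identity by hypothesis, so the cocycle visibly factors through $A[2]$.

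Your detour through the retraction $g$ and the fixed-point criterion ``fixed by $f_* g_*$'' is unnecessary and slightly risky: being fixed by $f_* g_*$ on $\pi_0$ only tells you the class has \emph{some} representative factoring through $A[2]$, which is what you want, but you would still need to relate that representative to the extension data. The paper avoids this by working directly with the explicit constructions $\psi$ and $\phi$, so that the correspondence between ``1-morphisms map to the identity'' and ``conjugation action is trivial'' is read off on the nose rather than up to homotopy.
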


\begin{proof}
We will show that $\CenExt(G, A)$ is the image of the function
\[
	\xymatrix{
	&H^2 (BG, A) \ar[r]^{\iso} &\pi_0 h(G, A[2]) 
	\ar[r]^(.45){f_*} &\pi_0 h(G, \Iso(A)) 
	\ar[r]^{\iso} &\pi_0 \Ext(G, A).
	}
\]
First, we'll recall the definition of the bijection
\[
	\psi : \pi_0 h(G, \Iso(A)) \to \pi_0 \Ext(G, A)
\]
of \cite[Theorem 9.66]{loc-hom-theory}. Say
\[
	\xymatrix{
	&G &Z \ar[l]_(.4){g} \ar[r]^(.4){F} &\Iso(A)
	}
\]
is a cocycle from $G$ to $\Iso(A)$. Define a presheaf of 2-groupoids $\EZF$, whose objects are the objects of $Z$. The 1-morphisms $x \to y$ of $\EZF(U)$ are pairs $(\alpha, f)$ with $\alpha : x \to y$ a 1-morphism of $Z(U)$ and $f \in A(U)$. A 2-morphism $(\alpha, f) \to (\beta, g)$ of $\EZF(U)$ is a 2-morphism $h: \alpha \to \beta$ of $Z(U)$ such that $F(h) = g^{-1}f$.

If $\alpha : x \to y$ is a 1-morphism of $Z(U)$, then $F(\alpha)$ is a 1-morphism of $\Iso(A)(U)$, ie an automorphism of $A|_U$; write $\alpha_*$ for this automorphism. The composite of $(\alpha, f): x \to y$ and $(\beta, g): y \to z$ in $\EZF(U)$ is $(\beta \alpha, g\beta_*(f)): x \to z$.

Let $E_ZF = \pi_0(\EZF)$ be the presheaf of path component groupoids.

There is a map $\pi : \EZF \to Z$ which is the identity on objects, on 1-morphisms is $(\alpha, f) \mapsto \alpha$, and takes the 2-morphism $h : (\alpha, f) \to (\beta, g)$ to the underlying 2-morphism $h: \alpha \to \beta$. Write $g_*$ for the composite
\[
	\xymatrix{
	&E_ZF \ar[r]^{\pi_*} &\pi_0(Z) \ar[r]^(.55){\sim} &G.
	}
\]
Let $K(F) = A \times \Ob(E_ZF)$; define $j : K(F) \to \Aut(E_ZF)$ by letting $j_x : A|_U \to E_ZF_x$ be defined in sections by the rule $f \mapsto [(1_x, f)]$, for all $x \in \Ob(E_ZF)(U)$.

Let $w : K(F) \to A$ be projection; then $(j,w)$ is a kernel for $g_*$ in $A$.

The rule $(g,F) \mapsto (g_*, j, w)$ defines a functor $h(G, \Iso(A)) \to \Ext(G, A)$, and $\psi$ is the induced function on path components.

An element of $\pi_0 h(G, \Iso(A))$ is in the image of $f_*$ if and only if it has a representative $(g: Z \to G, F: Z \to \Iso(A))$ such that $F$ factors as $Z \to A[2] \subset \Iso(A)$. This is equivalent to the condition that, for all 1-morphisms $\alpha$ of $Z(U)$, $F(\alpha) = \alpha_*$ is the identity on $A|_U$. Say $(g,F)$ is such a cocycle. We will show that for any 1-morphism $\sigma$ of $E_ZF(U)$, the automorphism of $A|_U$ given by conjugation with $\sigma$ is the identity.

Let $\sigma = [(\alpha,f)]: x \to y$ be a 1-morphism of $E_ZF(U)$. Let $V \to U$ be an object of $\C / U$, and let $x \mapsto x'$ and $y \mapsto y'$ under $Z(U) \to Z(V)$. For any $g \in A(V)$,
\begin{align*}
	[(\alpha,f)][(1_{x'}, g)][(\alpha,f)]^{-1}
	&= [(\alpha,f)(1_{x'}, g)(\alpha^{-1},f^{-1})]\\\
	&= [(\alpha,f)(\alpha^{-1}, gf^{-1})]\\\
	&= [(1_{y'}, f\alpha_*(gf^{-1}))]\\\
	&= [(1_{y'}, g)]
\end{align*}
So $g \mapsto g$ and $\sigma$ induces the identity on $A|_U$.

We've proved that the image of our function $H^2 (BG, A) \to \pi_0 \Ext(G, A)$ is contained in $\CenExt(G, A)$.

To see the opposite inclusion, let's briefly recall the bijection
\[
	\phi: \pi_0 \Ext(G,A) \to \pi_0 h(G, \Iso(A)) \; ,
\]
inverse to $\psi$. If $(p: G' \to G, j,w)$ is an object of $\Ext(G,A)$, there is a cocycle
\[
	\xymatrix{
	&G &R(p) \ar[l]_q \ar[r]^(.4){F(p)} &\Iso(A).
	}
\]
The presheaf of 2-groupoids $R(p)$ has the same objects and 1-morphisms as $G'$, and there is a 2-morphism $\alpha \to \beta$ if and only if $p(\alpha) = p(\beta)$. For a morphism $\alpha$ of $G'(U)$, $F(p)(\alpha)$ is the automorphism of $A|_U$ defined by conjugation with $\alpha$.

We have $\phi[(p,j,w)] = [(q, F(p))]$. Clearly, then, if $[(p,j,w)]$ is an element of $\CenExt(G, A)$, then $\phi[(p,j,w)]$ is represented by a cocycle in the image of $f_*$, namely $(q, F(p))$. This completes the proof.
\end{proof}

The elements of $H^2 (BG, A)$ corresponding to central extensions of sheaves of groups can be interpreted as universal obstruction classes, in the following sense:

\begin{theorem}
Let $A \to G' \xrightarrow{p} G$ be a central extension, where $A, G'$ and $G$ are sheaves of groups. For any simplicial presheaf $X$, there is an exact sequence of pointed sets
\[
	\xymatrix{
	&H^1 (X, G') \ar[r]^{p_*} &H^1 (X, G) \ar[r]^{\pi} &H^2 (X, A),
	}
\]
and if $F \in H^1 (X, G)$, then $\pi(F) = F^*(c_p)$, where $c_p \in H^2 (BG, A)$ classifies the extension $A \to G' \to G$.
\end{theorem}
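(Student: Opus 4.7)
My strategy is to realise the obstruction map $\pi$ as pullback of the universal class $c_p$ along a fibre sequence of simplicial presheaves.

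First, because $A \to G' \xrightarrow{p} G$ is a central extension of sheaves of groups, the associated sequence of sheaves is short exact. Applying the nerve functor produces a local homotopy fibre sequence
\[
	BA \to BG' \to BG
\]
of simplicial presheaves (the map $BG' \to BG$ need not be a sectionwise Kan fibration, but it is one stalkwise). Since $A$ is abelian, $BA \equiv K(A,1)$, so delooping the fibre extends this to a local fibre sequence $BG' \to BG \xrightarrow{k} K(A,2)$, where the connecting map $k$ classifies $BG' \to BG$ as a principal $BA$-bundle over $BG$.

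The main step is then to identify $k$ with the class $c_p \in H^2(BG, A)$ assigned to $(p, j, w)$ by the bijection of \cref{class-of-cen-ext}. Tracing through the function $\phi$ in the proof of that theorem sends the extension to the cocycle $(q : R(p) \to G, \; F(p) : R(p) \to \Iso(A))$; centrality forces $F(p)$ to factor through $A[2] \subset \Iso(A)$, so $c_p$ is represented by a cocycle $G \xleftarrow{q} R(p) \to A[2]$. Unwinding the chain of isomorphisms
\[
	H^2(BG, A) \iso [BG, \overline{W}A[2]] \iso [G, A[2]]_{2-\Gpd} \iso \pi_0 h(G, A[2])
\]
shows that this cocycle represents the principal-$BA$-bundle classifying map $k$. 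I expect this identification to be the main obstacle, because it requires a careful comparison of Jardine's $\overline{W}$-construction with the fibre-sequence description of $k$; everything after is a formal consequence.

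Once $k = c_p$ is established, the theorem follows. For $F \in H^1(X, G) \iso [X, BG]$, define $\pi(F) \in H^2(X, A) \iso [X, K(A,2)]$ as the class of the composite $X \xrightarrow{F} BG \xrightarrow{c_p} K(A,2)$, which is visibly $F^*(c_p)$. Applying $[X, -]$ to the fibre sequence $BG' \to BG \to K(A,2)$ then yields the desired exact sequence of pointed sets: $F$ lies in the image of $p_*$ iff it lifts through $BG' \to BG$, iff $F^*(c_p) = \pi(F)$ is trivial in $H^2(X, A)$.
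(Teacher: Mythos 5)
Your overall strategy coincides with the paper's: build a local fibre sequence $BG' \to BG \to K(A,2)$ and identify the connecting map with $c_p$, so that both the exact sequence and the formula $\pi(F) = F^*(c_p)$ drop out. But the proposal has a genuine gap exactly at the point you flag as "the main obstacle" and then defer: you never actually identify the delooping map $k$ with the class $c_p$ produced by \cref{class-of-cen-ext}. Saying that "unwinding the chain of isomorphisms shows that this cocycle represents the classifying map $k$" is not a proof; the two objects live on opposite sides of the Quillen equivalence between simplicial presheaves and presheaves of $2$-groupoids, and comparing them is the entire content of the second half of the theorem. The missing idea is concrete: the Borel construction $EBA \times_{BA} BG'$ (which is what actually realises your asserted delooping, and which you also do not construct --- "delooping the fibre" of a local principal $BA$-fibration is itself a claim that needs the Borel construction plus the Bousfield--Friedlander theorem to justify in the local setting) is isomorphic as a bisimplicial sheaf to $BBR(p)$, the double nerve of the resolution $2$-groupoid $R(p)$ from the proof of \cref{class-of-cen-ext}. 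With that identification in hand, the natural weak equivalence $d(BH) \to \overline{W}(H)$ turns the cocycle $BG \leftarrow d(EBA \times_{BA} BG') \to d(BBA)$ into $\overline{W}G \leftarrow \overline{W}(R(p)) \to \overline{W}(A[2])$, and applying the left adjoint $\pi G$ of $\overline{W}$ (using that it is part of a Quillen equivalence for the $2$-equivalence structure, in which every object is cofibrant) carries this to the cocycle $G \leftarrow R(p) \to A[2]$, which is by construction the one classifying the extension. Without the isomorphism $EBA \times_{BA} BG' \iso BBR(p)$ and the $\pi G \dashv \overline{W}$ comparison, your argument establishes the exact sequence for \emph{some} map $\pi$ but not that $\pi(F) = F^*(c_p)$ for the specific class $c_p$ of the first theorem, which is the substantive assertion here.
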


\begin{proof}
Because $A$ is central in $G'$, there is an induced action $BA \times BG' \to BG'$ of the simplicial sheaf of abelian groups $BA$ on the simplicial sheaf $BG'$. The Borel construction for this action is the bisimplicial sheaf $EBA \times_{BA} BG'$ with $(p,q)$-bisimplices given by the $q^{th}$ simplicial degree of the Borel construction for the action $A^{\times p} \times G'^{\times p} \to G'^{\times p}$.

The action of $A^{\times p}$ on $G'^{\times p}$ is free for all $p \geq 0$, so the map
\[
	EA^{\times p} \times_{A^{\times p}} G'^{\times p} \to G^{\times p}
\]
is a local weak equivalence. These maps induce a local weak equivalence from the diagonal
\[
	d(EBA \times_{BA} BG') \to BG \; .
\]
Moreover, there is a sequence of bisimplical sheaves
\[
	BG' \to EBA \times_{BA} BG' \to BBA,
\]
and, taking diagonals, the sequence
\[
	BG' \to d(EBA \times_{BA} BG') \to d(BBA)
\]
is a sectionwise fibre sequence, hence a local fibre sequence. In general, if $A \times X \to X$ is an action of a connected simplicial abelian group $A$ on a connected simplicial set $X$, then the sequences
\[
	X \to A^{\times p} \times X \to A^{\times p}
\]
are fibre sequences of connected simplicial sets, and so the sequence
\[
	X \to EA \times_A X \to BA
\]
of bisimplicial sets induces a fibre sequence of simplicial sets after taking diagonals, by a theorem of Bousfield and Friedlander, which appears as \cite[Theorem IV.4.9]{GJ}.

As $d(BBA) \equiv K(A,2)$, we have the exact sequence
\[
	\xymatrix{
	&H^1 (X, G') \ar[r]^{p_*} &H^1 (X, G) \ar[r]^{\pi} &H^2 (X, A).
	}
\]
This is exactly the argument of \cite[Example 9.11]{loc-hom-theory} in our case.

Say $F \in H^1 (X, G) = [X, BG]$. Then $\pi(F)$ is given by the diagram in the homotopy category
\[
	\xymatrix{
	&X \ar[r]^F &BG &d(EBA \times_{BA} BG') \ar[l]_(.7){\sim}
	\ar[r] &d(BBA).
	}
\]
To finish the proof, we need to show that the cocycle
\[
	BG \leftarrow d(EBA \times_{BA} BG') \to d(BBA) \tag{$\ast$}
\]
represents $c_p \in H^2 (BG, A)$.

First, note that
\[
	EBA \times_{BA} BG' \iso BBR(p),
\]
where $R(p)$ is the resolution 2-groupoid that appeared in the proof of \ref{class-of-cen-ext}. Using the natural weak equivalence $d(BH) \to \overline{W}(H)$, for $H$ a presheaf of groupoids enriched in simplicial sets, we have a pointwise equivalence from $(\ast)$ to the cocycle
\[
	\overline{W}G \leftarrow \overline{W}(R(p)) \to \overline{W}(A[2]).
	\tag{$\ast \ast$}
\]
The functor $\overline{W}$ has a left adjoint
\[
	\pi G : \sPreB \to \textbf{Pre}(2 - \mathsf{Gpd}),
\]
where $G$ is the loop groupoid functor to presheaves of groupoids enriched in simplicial sets, and $\pi$ is the fundamental groupoid functor to presheaves of 2-groupoids.

If $H$ is a groupoid enriched in simplicial sets, then $\pi(H)$ is the 2-groupoid with
\[
	\Ob(\pi(H)) = \Ob(H) \; \text{and} \; \Mor(\pi(H)) 
	= \pi(\Mor(H)) \; .
\]
This adjunction defines a Quillen equivalence between presheaves of 2-groupoids and the 2-equivalence model structure on simplicial presheaves of \cite[Theorem 5.49]{loc-hom-theory}. This follows from \cite[Proposition 9.59]{loc-hom-theory} and \cite[Proposition 9.61]{loc-hom-theory}. The 2-equivalence model structure has all monomorphisms for cofibrations, so that every object is cofibrant.

Now, for $M$ a presheaf of 2-groupoids and $\tilde{M}$ a fibrant model of $M$, we have natural weak equivalences
\[
	\xymatrix{
	&\pi G \overline{W}M \ar[r]^{\sim}
	&\pi G \overline{W} \tilde{M} \ar[r]^(.6){\sim} &\tilde{M},
	}
\]
as the functor $\overline{W}$ preserves weak equivalences. It follows from this that the cocycle obtained by applying $\pi G$ to $(\ast \ast)$ is pointwise equivalent to the cocycle
\[
	G \leftarrow R(p) \to A[2],
\]
which represents the class in $\pi_0 h(G, A[2])$ corresponding to the extension
\[
	A \to G' \to G.
\]
\end{proof}

The motivating example is the Brauer group:

On the \'{e}tale site $(Sch_S)_{\et}$ over a scheme $S$, the central extension of sheaves of groups
\[
	\Gm \to \GLn \xrightarrow{p} \PGLn
\]
corresponds to a class $c_p \in H^2_{\et} (B\PGLn, \Gm)$, and a $\PGLn$-torsor $F$ over $S$ lifts to a $\GLn$-torsor if and only if the class $F^*(c_p)$ vanishes in $H^2_{\et} (S, \Gm)$.

\section{The Nisnevich and \'{e}tale classifying spaces of projective linear groups}

\subsection{Preliminaries}

We'll often make use of the following contruction: if $G$ is a sheaf of groups on a site $\mathscr{C}$, let $G-\textbf{tors}$ be the presheaf of groupoids with $G-\textbf{tors}(U)$ the groupoid of $G$-torsors over $U$, for every object $U$ of $\mathscr{C}$. Then the obvious map
\[
	BG \to B(G-\textbf{tors})
\]
is a local weak equivalence, and $B(G-\textbf{tors})$ is injective fibrant \cite[Corollary 9.27]{loc-hom-theory}.

Let $k$ be a perfect field, and let $Sm_k$ be the category of smooth, separated $k$-schemes.

For $G$ a presheaf of groups on $Sm_k$, we'll use $BG$ to denote the Nisnevich classifying space of $G$. Following Morel and Voevodsky \cite{MV}, let $\Bet G$ be the Nisnevich homotopy type of an \'{e}tale fibrant model of $BG$. Explicitly, choose a map
\[
	j : BG \to F_{\et}(BG) \, ,
\]
where $j$ is an \'{e}tale local equivalence, and $F_{\et}(BG)$ is injective fibrant with respect to the \'{e}tale topology. Then $F_{\et}(BG)$ is a model of $\Bet G$.

As the name suggests, we have
\begin{align*}
	H^1_{\et} (X,G) &\iso \pi(X, B(G-\textbf{tors})_{\et})\\\
		&\iso [X, \Bet G]_{\Nis}
\end{align*}
where $\pi(-,-)$ denotes simplicial homotopy classes of maps.

In \cite[Lemma 4.1.18]{MV}, Morel and Voevodsky observe the following

\begin{proposition}
Let $G$ be a presheaf of groups. The map $BG \to \Bet G$ is a Nisnevich local equivalence if and only if $G$ is an \'{e}tale sheaf, and one of the following equivalent conditions holds:
\begin{enumerate}
	\item for any smooth scheme $S$ over $k$, one has 
	$H^1_{\Nis} (S,G) \iso H^1_{\et} (S,G)$.
	\item for any smooth scheme $S$ over $k$ and a point $x$ of $S$, 
	one has
	\[
		H^1_{\et} (\Spec(\mathcal{O}^h_{S,x}), G) = * \; .
	\]
\end{enumerate}
\end{proposition}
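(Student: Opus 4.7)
The plan is to verify the proposition by passing to Nisnevich stalks. A map of simplicial presheaves on $Sm_k$ is a Nisnevich local equivalence if and only if it is a weak equivalence of simplicial sets at every Nisnevich stalk, and the stalk at a point $x$ of a smooth $k$-scheme $S$ is computed by sections over $\Spec(\mathcal{O}^h_{S,x})$, since representables are continuous with respect to the cofiltered limit defining the henselization. So the first step is to identify these stalks for $BG$ and $\Bet G$ separately.

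The stalk of $BG$ at $(S,x)$ is the classifying space $B(G(\mathcal{O}^h_{S,x}))$, which is connected with $\pi_1 = G(\mathcal{O}^h_{S,x})$ and trivial higher homotopy. For $\Bet G = F_{\et}(BG)$, which is \'etale injective fibrant, evaluation on $U \in Sm_k$ gives $\pi_0(\Bet G(U)) = H^1_{\et}(U,G)$, and after basepointing the $\pi_1$ recovers the \'etale sheafification $G^{\et}(U)$. Passing to the henselian stalk by continuity, one finds $\pi_0 = H^1_{\et}(\mathcal{O}^h_{S,x}, G)$ and $\pi_1 = G^{\et}(\mathcal{O}^h_{S,x})$. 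Therefore $BG \to \Bet G$ is a Nisnevich local equivalence if and only if, for every $(S,x)$, both $H^1_{\et}(\mathcal{O}^h_{S,x}, G) = \ast$ and $G(\mathcal{O}^h_{S,x}) \to G^{\et}(\mathcal{O}^h_{S,x})$ is a bijection; the first is precisely condition (2), and together with the first, the second is equivalent to $G$ being an \'etale sheaf.

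For the equivalence (1) $\iso$ (2), the key tool is the change-of-topology morphism $\epsilon \colon (Sm_k)_{\et} \to (Sm_k)_{\Nis}$. Its Leray spectral sequence yields a five-term exact sequence
\[
    H^1_{\Nis}(S, G) \into H^1_{\et}(S,G) \to H^0_{\Nis}(S, R^1 \epsilon_* G),
\]
where $R^1 \epsilon_* G$ is the Nisnevich sheafification of $U \mapsto H^1_{\et}(U, G)$, whose Nisnevich stalk at $(S,x)$ is $H^1_{\et}(\mathcal{O}^h_{S,x}, G)$ by continuity. So (2) forces $R^1 \epsilon_* G = 0$ and hence (1). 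For the converse, since $\mathcal{O}^h_{S,x}$ is henselian local its Nisnevich cohomology is trivial; expressing both $H^1_{\Nis}$ and $H^1_{\et}$ at the henselization as filtered colimits over \'etale neighborhoods $U \in Sm_k$ of $x$ in $S$ and invoking (1) on each such $U$ gives (2).

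The main obstacle is the identification of the Nisnevich-stalkwise $\pi_1$ of $\Bet G$ with $G^{\et}$ applied at the henselization. A clean way to see this is to replace $\Bet G$ by the model $B(G^{\et}-\textbf{tors})$ on the \'etale site recalled at the start of this section: its sections over a henselian local scheme are the groupoid of $G^{\et}$-torsors, which under (2) is a one-object groupoid with automorphism group $G^{\et}(\mathcal{O}^h_{S,x})$. The remaining care is bookkeeping --- tracking sectionwise versus sheafified homotopy and checking that \'etale fibrant replacement respects the cofiltered limits computing Nisnevich henselian stalks.
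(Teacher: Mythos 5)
The paper does not actually prove this proposition: it is quoted verbatim from Morel--Voevodsky \cite[Lemma 4.1.18]{MV} as an observation, so there is no in-paper argument to compare against. Your stalkwise reconstruction is the standard proof of that lemma, and the overall shape is right: Nisnevich local equivalences are detected on henselian stalks, the stalk of $BG$ is the one-object groupoid on $G(\mathcal{O}^h_{S,x})$, the stalk of $\Bet G$ (modelled by the nerve of the \'etale torsor groupoid) has $\pi_0 = H^1_{\et}(\mathcal{O}^h_{S,x},G)$ and $\pi_1 = G^{\et}(\mathcal{O}^h_{S,x})$, and matching these gives the two conditions.

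Three points need attention before this is a complete proof. First, $G$ is not assumed abelian (the motivating example is $\PGLn$), so the ``Leray five-term sequence'' you invoke for $(1)\Leftrightarrow(2)$ must be the non-abelian low-degree exact sequence of pointed sets (Giraud), together with a separate argument that $H^1_{\Nis}(S,G)\to H^1_{\et}(S,G)$ is injective when $G$ is an \'etale sheaf; the abelian spectral sequence is not available. Second, the statement as printed is for a \emph{presheaf} of groups, but a stalkwise argument only sees the Nisnevich sheafification of $G$: the condition that $G(\mathcal{O}^h_{S,x})\to G^{\et}(\mathcal{O}^h_{S,x})$ be bijective at all henselizations yields only that $a_{\Nis}G \iso a_{\et}G$, which gives ``$G$ is an \'etale sheaf'' only under the additional hypothesis that $G$ is already a Nisnevich sheaf (as it is in Morel--Voevodsky's formulation); as literally stated for arbitrary presheaves the ``only if'' direction of the proposition cannot be recovered this way. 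Third, the continuity claims --- that sections, \'etale torsors, and the \'etale sheafification all commute with the cofiltered limit $\Spec(\mathcal{O}^h_{S,x}) = \varprojlim U$ --- are where the real technical content sits (the henselization is not an object of $Sm_k$), and they require finiteness hypotheses on $G$ (e.g.\ local finite presentation) that you should state rather than defer to ``bookkeeping.'' None of these is fatal for the groups actually used in the paper, but they are genuine gaps in the argument as written.
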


And, they point out \cite[Lemma 4.3.6]{MV} that general linear groups satisfy these conditions; so
\[
	B\GLn \to \Bet \GLn
\]
is a Nisnevich local equivalence for all $n > 0$.

\subsection{Nisnevich $\PGLn$-torsors}

If $S$ is a smooth scheme over a field, then the Nisnevich cohomology group $H^2_{\Nis} (S, \Gm)$ is zero, and so all Nisnevich $\PGLn$-torsors over $S$ lift to $\GLn$-torsors. The situation is more interesting over a simplicial scheme.

In the following, an inner automorphism of a presheaf of groups is an automorphism given by conjugation with a global section.

\begin{proposition} \label{tors-over-BG}
Let $G$ be a presheaf of groups on $Sm_k$, and let $H$ be a Nisnevich sheaf of groups. Then,
\[
	H^1_{\Nis} (BG, H) \iso \Hom (G, H) / \; \mathrm{inner} \; 
	\mathrm{automorphisms} \; \mathrm{of} \; H.
\]
\end{proposition}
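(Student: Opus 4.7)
The plan is to compute $H^1_{\Nis}(BG, H)$ by replacing $BH$ with the injective fibrant model $B(H-\textbf{tors})$ and then working in the category of presheaves of groupoids. Since $B(H-\textbf{tors})$ is injective fibrant and Nisnevich-locally weakly equivalent to $BH$,
\[
H^1_{\Nis}(BG, H) = [BG, B(H-\textbf{tors})]_{\Nis} = \pi(BG, B(H-\textbf{tors})).
\]
Because the nerve functor is fully faithful on presheaves of groupoids and sends natural isomorphisms to simplicial homotopies, this set is in bijection with $[G, H-\textbf{tors}]_{\Gpd}$, the set of morphisms of presheaves of groupoids modulo natural isomorphism.

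Next, I would unpack a morphism $\phi: G \to H-\textbf{tors}$. Since each $G(U)$ has a single object, the object part of $\phi$ produces a strict compatible family $T_U := \phi_U(\ast) \in (H-\textbf{tors})(U)$ with $f^* T_U = T_V$ for every $f: V \to U$. As $\Spec k$ is terminal in $Sm_k$, this family is equivalent to a single Nisnevich $H$-torsor $T := T_{\Spec k}$. The key input is that $\Spec k$ admits only trivial Nisnevich covers, so every such torsor is trivial. Fixing an isomorphism $T \iso H$ gives, compatibly, trivializations $T_U \iso H|_U$, and in these trivializations each $g \in G(U)$ acts as an element of $\Aut(H|_U) \iso H(U)$; naturality in $U$ assembles this into a homomorphism of presheaves of groups $G \to H$.

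Finally, a natural isomorphism between two such morphisms $\phi, \phi'$, after trivialization, becomes a compatible system $\eta = (\eta_U \in H(U))_U$ realizing $\phi'(g) = \eta_U \phi(g) \eta_U^{-1}$ on sections. Such a system is precisely a global section of $H$, which acts on $\Hom(G, H)$ by the associated inner automorphism; the freedom in the choice of trivialization contributes the same inner ambiguity. Together these yield
\[
[G, H-\textbf{tors}]_{\Gpd} \iso \Hom(G, H) / \text{inner automorphisms of } H.
\]
The main obstacle will be justifying the trivialization step cleanly: one must show that the global torsor extracted from $\phi$ is trivial, that $\phi$ then factors, up to natural isomorphism, through the trivial-torsor subobject (the image of the canonical $BH \to B(H-\textbf{tors})$), and that the resulting homomorphism $G \to H$ is well-defined modulo inner automorphism. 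This rests on the vanishing of $H^1_{\Nis}(\Spec k, H)$, which holds because $\Spec k$ is Nisnevich-local.
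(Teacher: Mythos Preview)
Your proposal is correct and follows essentially the same route as the paper. Both arguments replace $BH$ by the injective fibrant model $B(H{-}\textbf{tors})$, observe that a map out of $BG$ picks out a compatible family of torsors determined by its value over $\Spec k$, use that Nisnevich $H$-torsors over $\Spec k$ are trivial to produce a trivialization and hence a homomorphism $G \to H$, and identify the remaining ambiguity with conjugation by a global section of $H$. The only difference is cosmetic: you invoke full faithfulness of the nerve to pass to $[G, H{-}\textbf{tors}]_{\Gpd}$ and work with functors and natural isomorphisms of presheaves of groupoids, whereas the paper carries out the same manipulations directly at the level of simplicial maps and simplicial homotopies, constructing the trivializing homotopy $\tau : \phi \Rightarrow \psi$ by hand and then exhibiting the inverse map $\beta$ via $f \mapsto [BG \xrightarrow{Bf} BH \to B(H{-}\textbf{tors})]$.
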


\begin{proof}
We identify $H^1_{\Nis} (BG, H)$ with the set of maps in the homotopy category, with respect to the Nisnevich local equivalences, $[BG, BH]$. Because $B(H-\textbf{tors})$ is a fibrant model of $BH$, we have an isomorphism $[BG, BH] \iso \pi(BG, B(H-\textbf{tors}))$.

We'll begin by defining a function
\[
	\alpha : \pi(BG, B(H-\textbf{tors})) \to \Hom (G, H) / \; \mathrm{inner} 	\; \mathrm{automorphisms} \; \mathrm{of} \; H.
\]
Given a class in $\pi(BG, B(H-\textbf{tors}))$, choose a representative $\phi : BG \to B(H-\textbf{tors})$. Write $T$ for the $H$-torsor over $\Spec k$ corresponding to $\phi_0 : \Spec k \to \Ob(H-\textbf{tors})$. Because the groupoid of Nisnevich $H$-torsors over $\Spec k$ is contractible, we can choose an isomorphism $\tau_k : T \to H$, where $H$ is the trivial $H$-torsor over $\Spec k$; the choice of $\tau_k$ determines an isomorphism of $H$-torsors over $U$, $\tau_U : T_U \to H_U$, for every object $U$ of $Sm_k$. Furthermore, the choice of $\tau_k$ allows us to define a map $\psi : BG \to B(H-\textbf{tors})$, where $\psi_0 : \Spec k \to \Ob(H-\textbf{tors})$ corresponds to the trivial $H$-torsor $H$, and $\psi_1$ is defined by the rule $\psi_1(g) = \tau_U \phi_1(g) \tau_U^{-1}$ for every $g \in G(U)$. By construction, $\tau : \phi \Rightarrow \psi$ defines a simplicial homotopy. In simplicial degree 1, we have $\psi_1 : G \to \Aut(H) = H$; define
\[
	\alpha([\phi]) = [\psi_1].
\]
To see this is well-defined, let $\phi' : BG \to B(H-\textbf{tors})$ be a map with $\sigma : \phi \Rightarrow \phi'$ a simplicial homotopy. Write $T'$ for the $H$-torsor over $\Spec k$ corresponding to $\phi'_0$, and choose an isomorphism $\tau'_k : T' \to H$, which determines a simplicial homotopy $\tau' : \phi' \Rightarrow \psi'$. Let $\mu : \psi \Rightarrow \psi'$ be the composition $\tau' \circ \sigma \circ \tau^{-1}$. The homotopy $\mu$ gives an isomorphism $H \to H$ of $H$-torsors over $\Spec k$, i.e., an element $h \in H(k)$, and we have $\psi'_1 = h \psi_1 h^{-1}$, so that $\alpha$ is well-defined.

The function $\alpha$ is a bijection, as it has inverse
\[
	\beta : \Hom (G, H) / \; \mathrm{inner} 
	\; \mathrm{automorphisms} \; \mathrm{of} \; H 
	\to \pi(BG, B(H-\textbf{tors})),
\]
defined as follows: given a class in the domain, choose a representative $f : G \to H$, and let
\[
	\beta([f]) = [BG \xrightarrow{B(f)} BH \to B(H-\textbf{tors})].
\]
If $h \in H(k)$ and $f' = hfh^{-1}$, then $h$ defines a simplicial homotopy $B(f) \Rightarrow B(f')$, so $\beta$ is well-defined.
\end{proof}

So, for any presheaf of groups $G$, a Nisnevich $\PGLn$-torsor over $BG$ is given by a homomorphism $f : G \to \PGLn$, and $f$ lifts to a $\GLn$-torsor over $BG$ if and only if $f$ factors through the canonical map $p: \GLn \to \PGLn$.

By the results of Section 1, the central extension $\Gm \to \GLn \to \PGLn$ corresponds to an element $c_p$ of the group $H^2_{\Nis} (B\PGLn, \Gm)$, and a homomorphism $f : G \to \PGLn$ factors through $\GLn$ if and only if $f^*(c_p)$ vanishes in $H^2_{\Nis} (BG, \Gm)$. This is a statement about motivic cohomology, because of the isomorphism
\[
	H^2_{\Nis}(X, \Gm) \iso H^3 (X, \bbZ(1)),
\]
for any simplicial presheaf $X$.

In contrast, the group $H^3 (\Bet G, \bbZ(1))$ is zero if $G$ is a linear algebraic group, because $\Bet G$ has a model that is a sequential colimit of smooth schemes, by \cite[Proposition 4.2.6]{MV}.

In particular, the canonical homomorphism in motivic cohomology
\[
	H^* (\Bet \PGLn, \bbZ(*)) \to H^* (B_{\Nis} \PGLn, \bbZ(*))
\]
is not surjective.

\subsection{Chern classes}

Recall that we're using $BG$ to denote the Nisnevich classifying space of a presheaf of groups $G$ on the smooth site $Sm_k$.

If $k$ is a perfect field, the motivic cohomology of $\BGLn$ is a polynomial algebra over the cohomology of the base field \cite{pushin}:
\[
	H^*(\BGLn, \bbZ (*)) \iso H^* (k, \bbZ(*))[c_1, \dots , c_n]
\]
with $c_i \in H^{2i} (\BGLn, \bbZ(i))$.

If $G$ is a presheaf of groups on the Nisnevich site $Sm_k$ as before, and $f : G \to \GLn$ is a representation, then we can define the chern classes of $f$ to be
\[
	c_i(f) = f^*(c_i) \in H^{2i} (BG, \bbZ(i)) \; .
\]
As $B\GLn \equiv \Bet \GLn$, we can define chern classes in the motivic cohomology of $\Bet G$ in the same way. Because the canonical map in the homotopy category $BG \to \Bet G$ is natural in $G$, the homomorphism in motivic cohomology
\[
	H^{2*} (\Bet G, \bbZ(*)) \to H^{2*} (BG, \bbZ(*))
\]
takes chern classes to chern classes.

The goal of this section is to show that, when $p$ is an odd prime, this homomorphism
\[
	H^{2*} (\Bet \PGLp, \bbZ(*)) \to H^{2*} (B\PGLp, \bbZ(*))
\]
is injective, over any field of characteristic zero containing a primitive $p^{th}$ root of unity.

First, we need a lemma, which says that the motivic cohomology of a constant simplicial presheaf, with coefficients in a field, is as simple as possible. 

Let $X$ be a simplicial set, and let $F$ be any field. There is an adjunction
\[
	\Gamma^* : \sFvec \rightleftarrows \sPreF : \Gamma_*
\]
where $\Gamma^*$ is the constant presheaf functor, and $\Gamma_*$ is global sections.

\begin{lemma} \label{coh-of-constant}
Let $F$ be a field, and let $X$ be a simplicial set such that all singular cohomology groups $H^r (X,F)$ are finite-dimensional. Then,
\[
	H^* (\Gamma^* X, F(*)) \iso H^* (k, F(*)) \otimes H^*(X, F) \; ,
\]
where elements of $H^r (X,F)$ are seen as elements of the motivic cohomology group $H^r (\Gamma^* X, F(0))$.
\end{lemma}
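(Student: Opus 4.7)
The plan is to pass through the Quillen adjunction $\Gamma^* \dashv \Gamma_*$ in order to reduce the motivic cohomology of $\Gamma^* X$ to an ordinary simplicial computation on $X$. The functor $\Gamma^*$ carries monomorphisms to monomorphisms and weak equivalences of simplicial sets to sectionwise — hence Nisnevich-local — weak equivalences, so it is left Quillen with respect to the Nisnevich-local model structure on simplicial presheaves. Writing $R$ for a Nisnevich fibrant replacement functor, this gives a natural isomorphism
\[
H^n(\Gamma^* X, F(m)) = [\Gamma^* X, K(F(m), n)]_{\Nis} \iso [X, \Gamma_* R\, K(F(m), n)]_{\sSet},
\]
and by the definition of motivic cohomology of the base point, the Kan complex $\Gamma_* R\, K(F(m), n)$ has homotopy groups $\pi_i \iso H^{n-i}(k, F(m))$.

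Next I would observe that this Kan complex carries, up to weak equivalence, the structure of a simplicial $F$-vector space. Since $K(F(m), n)$ lives naturally in $\sPreF$ and the Nisnevich-local model structure transfers along the free/forget adjunction between $\sPre$ and $\sPreF$, the fibrant replacement can be chosen $F$-linearly, so that its global sections land in $\sFvec$. The Dold-Kan correspondence for $F$-vector spaces, combined with the fact that every chain complex of $F$-vector spaces is quasi-isomorphic to its homology, then yields a homotopy equivalence
\[
\Gamma_* R\, K(F(m), n) \simeq \prod_i K\bigl(H^{n-i}(k, F(m)),\, i\bigr).
\]

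To finish, I would map $X$ into each factor: $[X, K(V, i)]_{\sSet} = H^i(X, V)$, and the finite-dimensionality hypothesis on $H^r(X, F)$ together with the universal coefficient theorem over a field gives $H^i(X, V) \iso H^i(X, F) \otimes_F V$. Summing over $i$ and reorganizing by bidegree $(n, m)$ produces the Künneth decomposition of the lemma, with classes in $H^r(X, F)$ sitting in weight zero as stated. The main obstacle is the second step: one must verify that the Nisnevich fibrant replacement of $K(F(m), n)$ can be performed $F$-linearly, so that the global sections inherit an $F$-vector space structure and split as a product of Eilenberg-MacLane pieces. Equivalently, one needs a (transferred) model structure on $\sPreF$ whose forgetful functor to $\sPre$ preserves fibrant replacements up to Nisnevich-local weak equivalence, which is a standard left-transfer argument along the $F$-linearization adjunction.
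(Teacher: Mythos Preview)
Your argument is correct and follows the same skeleton as the paper's proof: pass to global sections via the $\Gamma^*\dashv\Gamma_*$ adjunction, use formality of chain complexes over a field to split the target into Eilenberg--Mac Lane pieces, and then invoke finite-dimensionality to turn $\Hom$ into $\otimes$.

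The one place where your route is longer than necessary is exactly the step you flag as the ``main obstacle''. You begin with the adjunction between $\sSet$ and simplicial presheaves, land on $[X,\Gamma_* R\,K(F(m),n)]_{\sSet}$, and then have to argue that the fibrant replacement $R$ can be chosen $F$-linearly so that the target splits. The paper sidesteps this entirely by taking the adjunction one category over, between $\sFvec$ and $\sPreF$: since motivic cohomology with $F$-coefficients is computed as $[F(\Gamma^*X),F(q)[-p]]$ in the derived category of $F$-linear presheaves, the adjunction gives directly
\[
[F(\Gamma^*X),\,F(q)[-p]]\;\iso\;[FX,\,F(q)[-p](k)]_{\sFvec},
\]
with both sides already living in simplicial $F$-vector spaces. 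No transfer argument is needed; the splitting $[C,D]\iso\prod_n\Hom(H_nC,H_nD)$ is then immediate from Dold--Kan plus the fact that every complex of $F$-vector spaces is quasi-isomorphic to its homology. So your transfer step is not wrong, just avoidable: if you linearise before taking the adjunction rather than after, the obstacle disappears.
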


\begin{proof}
The adjunction $\Gamma^* \dashv \Gamma_*$ is a Quillen adjunction for the injective model structure on $\sPreF$ and the usual model structure on $\sFvec$. So, we have
\begin{align*}
	H^p (\Gamma^* X, F(q)) &= [F(\Gamma^* X), F(q)[-p]] \\
		&\iso [FX, F(q)[-p](k)].
\end{align*}
For any simplicial $F$-vector spaces $C$ and $D$,
\[
	[C,D] \iso \prod_{n \geq 0} \operatorname{hom}(H_n (C), H_n (D)) \; .
\]
For $n \geq 0$, we have
\[
	H_n (F(q)[-p](k)) = H^{p-n}(k, F(q)),
\]
and so we have
\[
	H^p (\Gamma^* X, F(q)) \iso \prod_{n \geq 0} 
	\text{hom}(H_n (X, F), H^{p-n} (k, F(q))) \; .
\]
As $H^r (k, F(0)) \iso F$ if $r = 0$, and is zero otherwise, we have 
\[
	H^* (\Gamma^*X, F(0)) \iso H^* (X,F) \; .
\]

If $V,W$ are $F$-vector spaces with $V$ finite-dimensional, then the canonical map
\[
	V^{\vee} \otimes W \to \Hom(V,W)
\]
is an isomorphism. So, by our assumptions on $X$, we have
\[
	H^p (\Gamma^* X, F(q)) \iso \prod_{n \geq 0} 
	H^n (X, F) \otimes H^{p-n} (k, F(q)) \; .
\]
\end{proof}

In order to prove our result for $\PGLp$, we'll need its analogue for the group scheme of roots of unity $\mu_p$.

\begin{lemma} \label{chow-of-mu}
Let $p$ be prime. Over any perfect field $k$ containing a primitive $p^{th}$ root of unity, the homomorphism
\[
	H^{2*} (\Bet \, \mu_p, \bbZ(*)) \to H^{2*} (B \mu_p, \bbZ(*))
\]
is injective.
\end{lemma}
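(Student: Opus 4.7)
The plan exploits the fact that $\mu_p$ is the constant Nisnevich sheaf $\bbZ/p$ on $Sm_k$ under our hypothesis: every connected $U$ in $Sm_k$ has $\mu_p(U) = \mu_p(k) = \bbZ/p$, so $\mu_p \iso \Gamma^* \bbZ/p$ and the Nisnevich classifying space $B\mu_p$ is Nisnevich-locally equivalent to $\Gamma^* K(\bbZ/p, 1)$. Applying Lemma~\ref{coh-of-constant} with $F = \mathbb{F}_p$ yields
\[
	H^*(B\mu_p, \mathbb{F}_p(*)) \iso H^*(k, \mathbb{F}_p(*)) \otimes H^*(K(\bbZ/p, 1), \mathbb{F}_p),
\]
where (since $p$ is odd) the right-hand factor is the standard algebra $\mathbb{F}_p[y] \otimes \Lambda(x)$ with $|x| = 1$, $|y| = 2$, and $\beta x = y$.

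On the \'etale side, by the theorem of Morel--Voevodsky, $H^{2*}(\Bet\mu_p, \bbZ(*))$ agrees with the equivariant Chow ring $A^*_{\mu_p} \iso \bbZ[c]/(pc)$, where $c \in H^2(\Bet\mu_p, \bbZ(1))$ is the first Chern class of the tautological line bundle on $\Bet\mu_p$. So to prove the injectivity claim it suffices to show that each $c^i$ for $i \geq 1$ has nonzero image in $H^{2i}(B\mu_p, \bbZ(i))$; and since each such image is $p$-torsion, this reduces to showing the mod-$p$ image in $H^{2i}(B\mu_p, \mathbb{F}_p(i))$ is nonzero.

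The key identification is that the mod-$p$ reduction of $c$ corresponds, under the splitting of Lemma~\ref{coh-of-constant}, to the class $y \otimes \zeta_p$ in the summand $H^2(K(\bbZ/p, 1), \mathbb{F}_p) \otimes H^0(k, \mathbb{F}_p(1))$, where $\zeta_p \in \mu_p(k)$ is a chosen primitive $p$-th root of unity. This should follow from the Bockstein description $c \equiv \beta x \pmod{p}$, where $x$ is the tautological $\mu_p$-class, together with $\beta x = y$ in $H^*(K(\bbZ/p, 1), \mathbb{F}_p)$ and compatibility of the Bockstein with the comparison map. Taking $i$-th powers in the ring $H^*(B\mu_p, \mathbb{F}_p(*))$, the image of $c^i$ corresponds to $y^i \otimes \zeta_p^{\otimes i}$, which is nonzero because $y^i$ generates $H^{2i}(K(\bbZ/p, 1), \mathbb{F}_p) \iso \mathbb{F}_p$ and $\zeta_p^{\otimes i}$ generates $H^0(k, \mathbb{F}_p(i)) = \mu_p^{\otimes i}(k) \iso \mathbb{F}_p$.

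The main obstacle is making this identification of the mod-$p$ reduction of $c$ rigorous: it requires connecting the \'etale-side Chern-class construction on $\Bet\mu_p$ to the Bockstein in the cohomology of the constant-presheaf model of $B\mu_p$, while carefully tracking the motivic weight and the chosen identification $\mu_p \iso \bbZ/p$. Once this comparison is pinned down, the nonvanishing of each $c^i \pmod p$ is an immediate calculation in the explicit bigraded algebra $H^*(k, \mathbb{F}_p(*)) \otimes \mathbb{F}_p[y] \otimes \Lambda(x)$.
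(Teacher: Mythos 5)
Your proposal is correct and follows essentially the same route as the paper: identify $B\mu_p$ with the constant presheaf $\Gamma^* BC_p$, apply Lemma~\ref{coh-of-constant} with $\mathbb{F}_p$-coefficients, and show the mod-$p$ reduction of the Chern class $c$ is the product of the degree-two topological generator with the Tate-twist generator of $H^0(k,\mathbb{F}_p(1))$, hence has all powers nonzero. The identification you flag as the main obstacle is exactly the step the paper disposes of with ``one checks,'' and your Bockstein argument is a reasonable way to carry it out (note only that the lemma allows $p=2$, where $H^*(K(\bbZ/2,1),\mathbb{F}_2)=\mathbb{F}_2[x]$ rather than $\mathbb{F}_p[y]\otimes\Lambda(x)$, but the argument goes through unchanged).
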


\begin{proof}
The Chow ring of $\Bet \, \mu_p$ is generated by the first chern class $t$ of the embedding $\mu_p \subset \Gm$ (\cite[p190]{Vistoli}):
\[
	H^{2*} (\Bet \, \mu_p, \bbZ(*)) \iso \bbZ[t]/(p \cdot t) \; .
\]
Furthermore,
\[
	H^2 (B\mu_p, \bbZ(1)) \iso \Hom (\mu_p, \Gm) \iso \bbZ / p \; .
\]
Let $c$ denote the first chern class of $\mu_p \subset \Gm$ in $H^2 (B\mu_p, \bbZ(1))$: we need to show that $c$ has infinite multiplicative order in $H^{2*} (B\mu_p, \bbZ(*))$. For this we use $\bbZ/p$ coefficients.

First, note that
\[
	H^0 (k, \bbZ/p(i)) \iso H^0_{\et} (k, \mu_p^{\otimes i})
	\iso \bbZ/p \; ,
\]
and it's easy to see that a generator $x \in H^0 (k, \bbZ/p(1))$ has infinite multiplicative order.

As $k$ contains a primitive $p^{th}$ root of unity, the obvious map
\[
	\Gamma^* BC_p \to B\mu_p
\]
is a Nisnevich local equivalence, where $C_p$ denotes the cyclic group with $p$ elements. By \ref{coh-of-constant},
\[
	H^2 (\Gamma^* BC_p, \bbZ/p(0)) \iso H^2 (BC_p, \bbZ/p) 
	\iso \bbZ/p \; ,
\]
and a generator $y$ of this group has infinite multiplicative order in $H^* (\Gamma^* BC_p, \bbZ/p(0))$. Again by \ref{coh-of-constant}, $x \cdot y \in H^2 (\Gamma^* BC_p, \bbZ/p(1))$ has infinite multiplicative order.

One checks that, under the map
\[
	H^2 (\Gamma^* BC_p, \bbZ(1)) \to H^2 (\Gamma^* BC_p, \bbZ/p(1))
\]
we have $c \mapsto x \cdot y$, so that $c$ has infinite multiplicative order as well.
\end{proof}

The proof of the following theorem relies on the work of Vistoli on the Chow ring of the classifying space of $\PGLp$ \cite{Vistoli}. By a result of Morel-Voevodsky, \cite[Proposition 4.2.6]{MV}, we have
\[
	A^*_G \iso H^{2*} (\Bet G, \bbZ(*)) \; ,
\]
where $G$ is a linear algebraic group, and $A^*_G$ is the Chow ring of the classifying space of $G$, in the sense of Totaro \cite{Totaro}.

\begin{theorem} \label{chow-of-PGLp}
Let $p$ be an odd prime. Over any field of characteristic zero containing a primitive $p^{th}$ root of unity, the canonical homomorphism in motivic cohomology
\[
	H^{2*} (\Bet \PGLp, \bbZ(*)) \to H^{2*} (B\PGLp, \bbZ(*))
\]
is injective.
\end{theorem}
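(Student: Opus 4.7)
The plan is to reduce injectivity to a computation on a finite subgroup via Vistoli's work, and then to leverage Lemma~\ref{chow-of-mu} through a product argument. The subgroup to use is the image $H \iso \mu_p \times \mu_p$ of the extraspecial Heisenberg $p$-group in $\PGLp$: the Heisenberg group of order $p^3$ embeds in $\mathbf{Gl}_p$ via the standard representation, and its center maps to the scalar matrices, so the image in $\PGLp$ is elementary abelian of rank~2. For $p$ odd, one extracts from Vistoli's analysis of $A^*_{\PGLp}$ that the restriction $A^*_{\PGLp} \to A^*_H$ is injective; combined with the Morel--Voevodsky identification $A^*_G \iso H^{2*}(\Bet G, \bbZ(*))$ already quoted in the excerpt, this gives injectivity of $H^{2*}(\Bet \PGLp, \bbZ(*)) \to H^{2*}(\Bet H, \bbZ(*))$.

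The canonical map $BG \to \Bet G$ is natural in $G$, so applying it to $H \into \PGLp$ yields a commutative diagram
\[
\xymatrix{
H^{2*}(\Bet \PGLp, \bbZ(*)) \ar[r] \ar[d] & H^{2*}(\Bet H, \bbZ(*)) \ar[d] \\
H^{2*}(B\PGLp, \bbZ(*)) \ar[r] & H^{2*}(BH, \bbZ(*))
}
\]
Since the top arrow is injective, it suffices to show injectivity of the right vertical arrow.

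For $H = \mu_p \times \mu_p$, this is a rank-two upgrade of Lemma~\ref{chow-of-mu}. Because $k$ contains $\zeta_p$, the map $\Gamma^* B(C_p \times C_p) \to B(\mu_p \times \mu_p)$ is a Nisnevich local equivalence, and Lemma~\ref{coh-of-constant} identifies $H^*(BH, \bbZ/p(*))$ with $H^*(k, \bbZ/p(*)) \otimes H^*(B(C_p \times C_p), \bbZ/p)$. An element of $A^*_H \iso \bbZ[t_1, t_2]/(pt_1, pt_2)$ is a polynomial in the first Chern classes $t_1, t_2$ of the two copies of $\mu_p \subset \Gm$; its image in mod-$p$ motivic cohomology is a sum of monomials $x^{i+j} y_1^i y_2^j$, where $x \in H^0(k, \bbZ/p(1))$ and $y_1, y_2 \in H^2(B(C_p \times C_p), \bbZ/p)$ are the obvious generators. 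These monomials are linearly independent in the above tensor product, so any nonzero polynomial in $t_1, t_2$ pulls back to a nonzero class, yielding the required injectivity; the integral statement then follows because the positive-degree part of $A^*_H$ is already $p$-torsion.

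The main obstacle is the first step: distilling the precise restriction-injectivity statement from Vistoli's presentation of $A^*_{\PGLp}$. His computation is explicit and his detection arguments do pass through the Heisenberg subgroup together with Chern classes pulled back from $\mathbf{Gl}_p$, but some reorganization of his proofs may be needed to isolate the clean injectivity claim used above. The remaining steps are then essentially formal, with the final step a routine rank-two version of the $\bbZ/p$-coefficient bookkeeping already present in the proof of Lemma~\ref{chow-of-mu}.
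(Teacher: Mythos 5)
There is a genuine gap in your first step. You claim that Vistoli's analysis yields injectivity of the restriction $A^*_{\PGLp} \to A^*_{H}$ for $H \iso C_p \times \mu_p$ the image of the Heisenberg group, but this restriction cannot be injective: $A^*_{H} \iso \bbZ[\xi,\eta]/(p\xi,p\eta)$ is entirely $p$-torsion in positive degrees, whereas $A^*_{\PGLp}$ contains non-torsion classes in positive degrees (by \cite[Theorem 3.2]{Vistoli} it contains the ring of invariants $\bbZ[x_1-x_2,\dots,x_{p-1}-x_p]^{S_p}$, e.g.\ the Chern classes of the adjoint representation). Any such non-torsion class, multiplied by $p$, lies in the kernel of your restriction map. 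What Vistoli actually proves (\cite[Propositions 9.3 and 9.4]{Vistoli}) is that the restrictions to the maximal torus $T_{PGl_p}$ \emph{and} to $C_p\times\mu_p$ are \emph{jointly} injective, i.e.\ that
\[
	A^*_{\PGLp} \to A^*_{T_{PGl_p}} \times A^*_{C_p\times\mu_p}
\]
is injective; the torus factor is indispensable for detecting the non-torsion part. So "some reorganization of his proofs" will not produce the clean single-subgroup injectivity you want.

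The repair is straightforward and turns your argument into the one given here: run your comparison diagram with the product $T_{PGl_p} \times (C_p\times\mu_p)$-detection in the top row. The torus factor causes no difficulty because $T_{PGl_p} \iso \Gm^{\times p-1}$, so $B T_{PGl_p} \to \Bet T_{PGl_p}$ is a Nisnevich local equivalence and the corresponding vertical arrow is an isomorphism. The finite factor is then handled exactly by your rank-two upgrade of \ref{chow-of-mu} via \ref{coh-of-constant}, which is correct as written (a homogeneous degree-$n$ class $\sum a_{ij}\xi^i\eta^j$ maps to $x^n\sum a_{ij}y_1^iy_2^j$, nonzero whenever some $a_{ij}\not\equiv 0 \bmod p$, and positive-degree classes are already $p$-torsion). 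With that substitution your proof coincides with the paper's.
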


\begin{proof}
In \cite{Vistoli}, Vistoli defines a subgroup $C_P \times \mu_p \subset \PGLp$, as follows.

Let $\omega$ be a primitive $p^{th}$ root of unity in the base field $k$, and let $\tau$ be the diagonal matrix $\diag(\omega, \omega^2, \dots, \omega^{p-1}, 1)$. Then $\tau$ generates a subgroup of $\PGLp$ isomorphic to $\mu_p$.

Let $\sigma$ be the permutation matrix corresponding to the cycle 
$(1 \, 2 \, \dots p) \in S_p$. Then $\sigma$ generates a subgroup of $\PGLp$ isomorphic to $C_p$, the cyclic group of order $p$, viewed as a group scheme over $k$ in the usual way. Of course, 
$C_p \iso \mu_p$, but the notation is meant to be suggestive.

In $\textbf{Gl}_p$, we have $\tau \sigma = \omega \sigma \tau$, so $\sigma$ and $\tau$ commute in $\PGLp$, and they generate a subgroup of $\PGLp$ isomorphic to $C_p \times \mu_p$. 

Just for this proof, write
\[
	CH^*X = H^{2*} (X, \bbZ(*)) \; .
\]

Let $T_{PGl_p}$ be the standard maximal torus in $\PGLp$, consisting of classes of diagonal matrices. By work of Totaro and Vistoli, \cite[Proposition 9.3]{Vistoli} and \cite[Proposition 9.4]{Vistoli}, the inclusions $T_{PGl_p} \subset \PGLp$ and $C_p \times \mu_p \subset \PGLp$ induce an injective homomorphism
\[
	CH^* \Bet \PGLp \to CH^* \Bet T_{PGL_p} 
	\times CH^* \Bet (C_p \times \mu_p) \; .
\]
We have (\cite[p194]{Vistoli}):
\[
	CH^* \Bet (C_p \times \mu_p) \iso
	\bbZ[\xi, \eta] / (p\xi, p\eta) \; ,
\]
where $\xi$ is the first chern class of the character with $\sigma \mapsto \omega$ and $\tau \mapsto 1$, and $\eta$ is the first chern class of the character with $\sigma \mapsto 1$ and $\tau \mapsto \omega$. Using this and an argument analogous to the proof of \ref{chow-of-mu}, one shows that the natural map
\[
	CH^* \Bet (C_p \times \mu_p) \to
	CH^* B (C_p \times \mu_p)
\]
is injective. As group-schemes, we have
\[
	T_{PGl_p} \iso T_{Sl_p} \iso \Gm^{\times p-1} \; ,
\]
so that $\Bet T_{PGL_p} \equiv B T_{PGl_p}$.

Consider the commutative diagram
\begin{footnotesize}
\[
	\xymatrix{
	&CH^* \Bet \PGLp \ar[r] \ar[d]
	&CH^* B\PGLp \ar[d] \\
	&CH^* \Bet T_{PGL_p} \times 
	CH^* \Bet (C_p \times \mu_p) \ar[r]
	&CH^* B T_{PGL_p} \times 
	CH^* B (C_p \times \mu_p)\\
	}
\]
\end{footnotesize}

The bottom route around the square is injective, and it follows that
\[
	CH^* \Bet \PGLp \to CH^* B\PGLp
\]
is injective.
\end{proof}

Finally, let's summarize what \ref{chow-of-PGLp} tells us about $H^{2*} (B\PGLp, \bbZ(*))$.

Let $R = H^* (k, \bbZ(*))$. We have
\[
	H^* (B\Gm^{\times n}, \bbZ(*)) \iso R[x_1, \dots, x_n] \; ,
\]
where $x_i \in H^2 (B\Gm^{\times n}, \bbZ(1))$,
\[
	H^* (B\GLn, \bbZ(*)) \iso R[\sigma_1, \dots, \sigma_n] \; ,
\]
where $\sigma_i$ is the $i^{th}$ elementary symmetric polynomial in the variables $x_i$, and
\[
	H^* (B T_{PGl_n}, \bbZ(*)) \iso 
	R[x_1 - x_2, \dots, x_{n-1} - x_n] \; .
\]
Furthermore, there is a commutative diagram:
\[
	\xymatrix{
	&H^* (B\PGLn, \bbZ(*)) \ar[r] \ar[d] 
	&R[x_1 - x_2, \dots, x_{n-1} - x_n] \ar[d]\\
	&R[\sigma_1, \dots, \sigma_n] \ar[r]
	&R[x_1, \dots, x_n]\\
	}
\]
This gives a homomorphism to the symmetric part of the cohomology of the maximal torus:
\[
	H^* (B\PGLn, \bbZ(*)) \to 
	R[x_1 - x_2, \dots, x_{n-1} - x_n]^{S_n} \; .
\]
Restricting attention to the Chow groups, and letting $n = p$, we have a homomorphism
\[
	H^{2*} (B\PGLp, \bbZ(*)) \to 
	\bbZ[x_1 - x_2, \dots, x_{p-1} - x_p]^{S_p} \; ,
\]
and \cite[Theorem 3.2]{Vistoli}, and \ref{chow-of-PGLp} together imply that this homomorphism has a section. Furthermore, there is an element 
$\rho \in H^{2p+2} (B\PGLp, \bbZ(p+1))$ not in the image of this section, satisfying some relations given in \cite[Theorem 3.3]{Vistoli}.

\bibliographystyle{amsplain}
\bibliography{ref}

\end{document}